\algnewcommand{\IIf}[1]{\State\algorithmicif\ #1\ \algorithmicthen}
\algnewcommand{\EndIIf}{\unskip\ \algorithmicend\ \algorithmicif}
\newtheorem{thm}{Theorem}[section]
\newtheorem{cor}[thm]{Corollary}
\newtheorem{ex}[thm]{Example}
\newtheorem{rem}{Remark}[section]
\begin{document}

\title{The maximum cardinality of trifferent codes with lengths $5$ and $6$}

\author[S. Della Fiore]{Stefano Della Fiore}
\address{DII, Universit\`a degli Studi di Brescia, Via
Branze 38, I-25123 Brescia, Italy}
\email{s.dellafiore001@unibs.it}

\author[A. Gnutti]{Alessandro Gnutti}
\address{DII, Universit\`a degli Studi di Brescia, Via
Branze 38, I-25123 Brescia, Italy}
\email{alessandro.gnutti@unibs.it}

\author[S. Polak]{Sven Polak}
\address{Centrum Wiskunde \& Informatica (CWI), Amsterdam, the Netherlands}
\email{sven.polak@cwi.nl}

\begin{abstract}
A code $\mathcal{C} \subseteq \{0, 1, 2\}^n$ is said to be trifferent with length $n$ when for any three distinct elements of $\mathcal{C}$ there exists a coordinate in which they all differ. Defining $\mathcal{T}(n)$ as the maximum cardinality of trifferent codes with length $n$,
$\mathcal{T}(n)$ is unknown for $n \ge 5$. In this note, we use an optimized search algorithm to show that $\mathcal{T}(5) = 10$ and $\mathcal{T}(6) = 13$.
\end{abstract}

\keywords{perfect k-hashing,  trifferent codes }
\subjclass[2010]{68R05;  68Q17}

\maketitle

\section{Introduction}
Let $k \geq 3$ and $n \geq 1$ be integers, and let $\mathcal{C}$ be a subset of $\{0,1,\ldots,k-1\}^n$ with the property that for any $k$ distinct elements there exists a coordinate in which they all differ. A subset $\mathcal{C}$ with this property is called perfect $k$-hash code with length $n$ (perfect $3$-hash codes are called trifferent codes).  The problem of finding upper bounds for the maximum size of perfect $k$-hash codes is a fundamental problem in theoretical computer science. An elementary double counting argument, as shown in \cite{KornerMarton}, gives the following bound on the cardinality of $k$-hash codes:

\begin{equation}\label{eq:recursivebound}
    |\mathcal{C}| \leq (k-1) \cdot \left(\frac{k}{k-1}\right)^n \textit{ for every } k \geq 3\,.
\end{equation}

In 1984 Fredman and Koml\'os \cite{FredmanKomlos} improved the bound in \eqref{eq:recursivebound} for every $k \geq 4$ and sufficiently large $n$, obtaining the following result:

\begin{equation}\label{eq:fredmanKomlos}
    |\mathcal{C}| \leq \left(2^{k! / {k^{k-1}}}\right)^n.
\end{equation}

Additional refinements of this bound have been progressively achieved over the years. See for example \cite{Arikan2, Arikan1, DalaiJaikuGuru} for the case $k=4$,  \cite{CostaDalai} for the cases $k=5,6$, and \cite{KornerMarton, RiazanovGuru, DellaFioreCostaDalai1, DellaFioreCostaDalai2} for $k \geq 5$. For the sake of completeness, we mention that some improvements on the asymptotic probabilistic lower bounds on the maximum size of perfect $k$-hash codes have been recently obtained in \cite{XingYuan} for both small values of $k$ and $k$ sufficiently large.

In contrast, no recent progress has been made to improve the simple bound given in \eqref{eq:recursivebound} for $k = 3$. This bound has not been outperformed by any algebraic technique, including the recent slice-rank method by Tao \cite{TaoSliceRank}. Indeed, Costa and Dalai showed in \cite{CostaDalaiSliceRank} that the slice-rank method cannot be applied in a \textit{simple way} in order to improve the bound in \eqref{eq:recursivebound}. It is worth to mention that an improvement has been recently obtained in \cite{PohataZakharov}, however the authors restrict the codes to be linear, i.e., $\mathcal{C} \subset \mathbb{F}_3^n$ and $\mathcal{C}$ is a subspace of $\mathbb{F}_3^n$. 

As a consequence, particular attention is given to the case $k=3$.
Defining $\mathcal{T}(n)$ as the maximum cardinality of trifferent codes with length $n$, it is easy to verify that $\mathcal{T}(1)=3$, $\mathcal{T}(2)=4$ and $\mathcal{T}(3)=6$.
In addition, the authors in \cite{KornerMarton} showed that the so called \textit{tetra-code} is a trifferent code with length $4$ and cardinality $9$: this result leads to $\mathcal{T}(4)=9$.
To the best of our knowledge, $\mathcal{T}(n)$ is currently unknown for $n \ge 5$. In this note, we show that $\mathcal{T}(5) = 10$ and $\mathcal{T}(6) = 13$ and we use these results to refine the current best known upper bound on the cardinality of trifferent codes with length $n \geq 5$ (Section \ref{sec:results}).
The exact value is achieved by implementing an optimized algorithm in GAP which exhibits the non-existence of trifferent codes with lengths $5$ and $6$ and cardinalities $11$ and $14$, respectively (the algorithm description is given in Section \ref{sec:algorithm}).



\section{Improved upper bound on \texorpdfstring{$\mathcal{T}(n)$ for $n \geq 5$}{T(n) for n at least 5}}\label{sec:results}
The simple recursion used to obtain the bound in \eqref{eq:recursivebound} for $k=3$ is:
\begin{equation}\label{eq:recursion}
    \mathcal{T}(n) \leq \left\lfloor \frac{3}{2} \cdot \mathcal{T}(n-1) \right\rfloor,
\end{equation}
for every $n \geq 2$, with $\mathcal{T}(1)=3$.
Since $\mathcal{T}(4) = 9$, then $10 \leq \mathcal{T}(5) \leq \left\lfloor \frac{3}{2} \cdot 9 \right\rfloor = 13$.
The upper bound is obtained using \eqref{eq:recursion}, while the lower bound comes easily from the fact that
$
    \mathcal{T}(n) \geq \mathcal{T}(n-1)+1,
$
for every $n \geq 2$. Indeed, when a construction of a trifferent code with length $n-1$ is known, then it is always possible to trivially add an element of $\{0,1,2\}^n$ preserving the trifference property.
In Example \ref{ex:n5} we give a construction of a trifferent code with length $5$ and cardinality $10$ that is built using the tetra-code, see \cite{KornerMarton} for the definition. The $10$ elements of $\{0,1,2\}^5$ are represented in columns.
For $n=6$, we have that $13 \leq \mathcal{T}(6) \leq 19$. A trifferent code with length $6$ and cardinality $13$ is given in Example \ref{ex:n6}.


\begin{ex}[$\mathcal{T}(5) \geq 10$]\label{ex:n5}
\begin{center}
\begin{footnotesize}
\begin{equation*}
\begin{array}{|rrrrrrrrrr|}
 0 & 0 & 0 & 0 & 1 & 1 & 1 & 2 & 2 & 2 \\
 0 & 0 & 1 & 2 & 0 & 1 & 2 & 0 & 1 & 2 \\
 0 & 0 & 1 & 2 & 2 & 0 & 1 & 1 & 2 & 0 \\
 0 & 0 & 1 & 2 & 1 & 2 & 0 & 2 & 0 & 1\\
 0 & 1 & 2 & 2 & 2 & 2 & 2 & 2 & 2 & 2
\end{array}
\end{equation*}
\end{footnotesize}
\end{center}
\end{ex}


\begin{ex}[$\mathcal{T}(6) \geq 13$]\label{ex:n6}
\begin{center}
\begin{footnotesize}
\begin{equation*}
\begin{array}{|rrrrrrrrrrrrr|}
0&0&2&2&2&2&2&0&1&1&1&1&1 \\ 
0&1&0&2&2&1&2&2&0&1&1&2&1 \\
0&1&1&0&2&2&1&2&2&0&1&1&2 \\
0&1&1&1&0&2&2&2&2&2&0&1&1 \\
0&1&2&1&1&0&2&2&1&2&2&0&1 \\
0&1&1&2&1&1&0&2&2&1&2&2&0 \\
\end{array}
\end{equation*}
\end{footnotesize}
\end{center}
\end{ex}

We have designed an algorithm for searching trifferent codes with lengths $5$ and $6$ and cardinalities $11$ and $14$, respectively (see Section \ref{sec:algorithm} for the description of the algorithm). The search ended without returning any trifferent codes, thus proving that $\mathcal{T}(5) \leq 10$ and $\mathcal{T}(6) \leq 13$. Hence, the following theorem holds:

\begin{thm}\label{teo:nonexistence}
$
    \mathcal{T}(5) = 10 \text{ and } \mathcal{T}(6) = 13.
$
\end{thm}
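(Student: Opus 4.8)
The plan is to establish the two upper bounds $\mathcal{T}(5) \le 10$ and $\mathcal{T}(6) \le 13$ computationally, since the matching lower bounds are already furnished by the explicit constructions in Examples \ref{ex:n5} and \ref{ex:n6}. So it suffices to prove that there is no trifferent code of length $5$ with $11$ codewords, and none of length $6$ with $14$ codewords. The naive search space $\binom{3^n}{m}$ is astronomically large (for $n=6$, $m=14$ this is $\binom{729}{14}$), so the whole content of the argument is in pruning this search intelligently.

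The first reduction I would exploit is symmetry: the automorphism group of the problem is generated by permutations of the $n$ coordinates and, independently in each coordinate, permutations of the three symbols $\{0,1,2\}$; this group has order $n!\cdot 6^n$. One may assume the first codeword is the all-zero vector (by a global symbol permutation in each coordinate), which kills the $6^n$ factor on the first word, and then build the code greedily codeword by codeword, canonically pruning: at each stage keep only lexicographically minimal representatives under the stabilizer of the partial code. Equivalently, one does an orbit-wise backtracking / branch-and-bound search. The second ingredient is a strong feasibility check at each node: given a partial code $\mathcal{C}'$, a candidate extension $v$ is admissible only if for every pair $c_1,c_2\in\mathcal{C}'$ there is a coordinate where $c_1,c_2,v$ all differ; maintaining, for each pair already in the code, the set of coordinates that "resolve" it lets one reject candidates quickly and also lets one bound how many further codewords can possibly be added (if the remaining candidate set is too small, prune). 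Running this in GAP — which has efficient machinery for permutation groups, stabilizers, and orbit computations — one exhausts the (now tractable) search tree and finds that no code of the target size exists.

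The key steps, in order: (1) fix the ambient symmetry group $S_n \ltimes (S_3)^n$ and normalize the first codeword to $\mathbf{0}$; (2) set up the backtracking search that adds codewords one at a time, at each step iterating only over orbit representatives of admissible candidates under the stabilizer of the current partial code; (3) at each node enforce the trifference constraint for all triples involving the new codeword and maintain the bookkeeping of resolving coordinates, using it both to filter candidates and as a look-ahead bound to abandon hopeless branches; (4) run the search to completion for $(n,m)=(5,11)$ and $(n,m)=(6,14)$, obtaining no solutions; (5) combine with Examples \ref{ex:n5} and \ref{ex:n6} to conclude $\mathcal{T}(5)=10$ and $\mathcal{T}(6)=13$. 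The detailed description of the algorithm and its optimizations is deferred to Section \ref{sec:algorithm}.

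The main obstacle is making the search actually terminate in reasonable time: even after normalizing the first word, the raw tree is enormous, so the efficiency hinges entirely on (a) the quality of the canonical/orbit pruning — recomputing stabilizers of partial codes and their orbits on candidate sets is the expensive part, and one must be careful to prune isomorphic partial codes as early as possible rather than only at the leaves — and (b) the strength of the look-ahead bound that certifies a partial code cannot be completed to the target cardinality. A secondary concern is correctness of the implementation: since this is a computer-assisted non-existence proof, one wants independent sanity checks, e.g. re-deriving the known values $\mathcal{T}(4)=9$ and $\mathcal{T}(3)=6$ with the same code, and verifying that the search does recover the length-$5$ and length-$6$ optima of sizes $10$ and $13$.
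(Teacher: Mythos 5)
Your proposal takes essentially the same route as the paper: the lower bounds come from the explicit constructions in Examples \ref{ex:n5} and \ref{ex:n6}, and the upper bounds from an exhaustive, symmetry-pruned backtracking search (implemented in GAP) showing that no trifferent $(5,11)$- or $(6,14)$-code exists. The only differences are in the pruning details --- the paper normalizes to codes containing $\mathbf{0}$ and a word of the form $(0,\dots,0,1,\dots,1)$ and adds lexicographic index bounds derived from the counting fact $s_0,s_1,s_2 \ge M-\mathcal{T}(n-1)$ on first-coordinate symbol frequencies, whereas you propose stabilizer/orbit-based isomorph rejection at each node --- but these are implementation choices within the same argument.
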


This result allows us to focus on the current best known bounds on the maximum cardinality of trifferent codes, which can be expressed as $\mathcal{T}(n) \leq c \cdot (3/2)^n$, where $c$ is a constant and $n$ is sufficiently large. Since finding a better upper bound on the $\limsup_{n \to \infty} \sqrt[n]{\mathcal{T}(n)}$ is a very hard task, it becomes interesting to improve the constant $c$. The bound shown in \eqref{eq:recursivebound} gives us $c=2$, but a better constant can be obtained using \eqref{eq:recursion} and the fact that $\mathcal{T}(4) = 9$, that is $c = 9/ (3/2)^4 \approx 1.78$. We are able to improve this constant using Theorem \ref{teo:nonexistence} and \eqref{eq:recursion}. These statements directly imply:

\begin{cor}\label{teo:bestconstant}
\begin{align*}\mathcal{T}(n) \leq \frac{10}{(3/2)^5} \cdot \left(\frac{3}{2}\right)^n \approx 1.32 \cdot  \left(\frac{3}{2}\right)^n \text{ for every } n \geq 5, \\
\mathcal{T}(n) \leq \frac{13}{(3/2)^6} \cdot \left(\frac{3}{2}\right)^n \approx 1.15 \cdot  \left(\frac{3}{2}\right)^n \text{ for every } n \geq 6.
\end{align*}
\end{cor}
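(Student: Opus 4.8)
The plan is to prove each of the two inequalities by a short induction on $n$, using the classical halving recursion \eqref{eq:recursion} for the inductive step and Theorem \ref{teo:nonexistence} to anchor the base case. For the first inequality, I would first rewrite the claimed bound in the equivalent form $\mathcal{T}(n) \leq 10 \cdot (3/2)^{n-5}$, since $\tfrac{10}{(3/2)^5}\,(3/2)^n = 10\,(3/2)^{n-5}$. The base case $n=5$ is immediate: Theorem \ref{teo:nonexistence} gives $\mathcal{T}(5) = 10 = 10\,(3/2)^0$, so equality holds. For the inductive step, assuming $\mathcal{T}(n-1) \leq 10\,(3/2)^{n-6}$ for some $n \geq 6$, I would apply \eqref{eq:recursion} and drop the floor (legitimate since $\lfloor x\rfloor \leq x$ and we only want an upper bound), obtaining
\[
\mathcal{T}(n) \leq \left\lfloor \frac{3}{2}\,\mathcal{T}(n-1) \right\rfloor \leq \frac{3}{2}\,\mathcal{T}(n-1) \leq \frac{3}{2}\cdot 10 \cdot \left(\frac{3}{2}\right)^{n-6} = 10 \cdot \left(\frac{3}{2}\right)^{n-5},
\]
which closes the induction. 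The numerical value $10/(3/2)^5 = 320/243 \approx 1.32$ then gives the stated constant.

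The second inequality is proved in exactly the same way, now anchoring the induction at $n=6$ with $\mathcal{T}(6) = 13$ (again from Theorem \ref{teo:nonexistence}) in place of the base value used above, and propagating through \eqref{eq:recursion} for all $n \geq 7$; here the constant is $13/(3/2)^6 = 832/729 \approx 1.15$. Since $13/(3/2)^6 < 10/(3/2)^5$, the second bound is the sharper one on its range $n \geq 6$, as one expects from pushing the known exact value one coordinate further.

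The argument has essentially no obstacle once Theorem \ref{teo:nonexistence} is in hand: the only point requiring any care is that \eqref{eq:recursion} carries a floor, but for an upper bound we may freely replace $\bigl\lfloor \tfrac{3}{2} x \bigr\rfloor$ by $\tfrac{3}{2} x$, after which the induction is completely routine. In other words, the entire content of the corollary is the transfer of the new exact values $\mathcal{T}(5)=10$ and $\mathcal{T}(6)=13$ into the asymptotic constant $c$ via iterated application of the classical recursion, replacing the previously best anchor $\mathcal{T}(4)=9$.
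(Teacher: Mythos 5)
Your proof is correct and follows the same route as the paper, which simply notes that Theorem \ref{teo:nonexistence} and the recursion \eqref{eq:recursion} ``directly imply'' the corollary: iterate the recursion from the base values $\mathcal{T}(5)=10$ and $\mathcal{T}(6)=13$, dropping the floor since only an upper bound is needed. Your write-up just makes this routine induction explicit.
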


Since the floor function is involved in the recursive formula \eqref{eq:recursion}, we can improve the constant $c$ by iterating \eqref{eq:recursion} $m$ times starting from a fixed $n_0$ and a known upper bound on $\mathcal{T}(n_0)$. This results in the following theorem.

\begin{thm}\label{teo:improvedconst}
$\mathcal{T}(n) \leq 1.09 \cdot \left(\frac{3}{2}\right)^n$ for every $n \geq 12$.
\end{thm}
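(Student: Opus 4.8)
The plan is to iterate the recursion \eqref{eq:recursion} starting from the best available exact value $\mathcal{T}(6)=13$ and to exploit the floor function carefully, since a naive iteration only yields $\mathcal{T}(n)\leq 13\cdot(3/2)^{n-6}$, i.e. the constant $13/(3/2)^6\approx 1.15$ of Corollary \ref{teo:bestconstant}. The key observation is that each application of $\lfloor \tfrac{3}{2}\,\cdot\,\rfloor$ to an odd number loses a genuine half, and that the loss compounds: if $\mathcal{T}(n_0)\leq N$ then after one step we get $\lfloor 3N/2\rfloor$, which is strictly less than $3N/2$ whenever $N$ is odd. So first I would compute the concrete finite sequence of bounds obtained by applying \eqref{eq:recursion} repeatedly to $\mathcal{T}(6)\leq 13$, namely $\mathcal{T}(7)\leq 19$, $\mathcal{T}(8)\leq 28$, $\mathcal{T}(9)\leq 42$, $\mathcal{T}(10)\leq 63$, $\mathcal{T}(11)\leq 94$, $\mathcal{T}(12)\leq 141$, and read off that $141/(3/2)^{12}\approx 1.086 < 1.09$.

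Second, I would show that the constant does not deteriorate for $n>12$: once we have $\mathcal{T}(n_0)\leq c_0\cdot(3/2)^{n_0}$ with $c_0<1.09$, the bare recursion $\mathcal{T}(n)\leq \tfrac{3}{2}\mathcal{T}(n-1)$ (dropping the floor) propagates the same constant forward, since $\lfloor 3x/2\rfloor\leq 3x/2$. Hence it suffices to establish the bound at the single value $n_0=12$ and then invoke this monotone propagation for all $n\geq 12$. Concretely, for $n\geq 12$ write $n=12+m$ and conclude $\mathcal{T}(n)\leq \mathcal{T}(12)\cdot(3/2)^m\leq 141\cdot(3/2)^{n-12} = \bigl(141/(3/2)^{12}\bigr)\cdot(3/2)^n\leq 1.09\cdot(3/2)^n$.

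The only real step is the finite computation in the first paragraph, and the only subtlety there is bookkeeping: one must apply the floor at every stage rather than carrying fractions, because the whole gain over Corollary \ref{teo:bestconstant} comes from the rounding losses accumulated at the odd intermediate values $13,19,63,141$ (and the value $94$, etc.), as opposed to multiplying $13$ by $(3/2)^{6}$ in one shot. I would also note that starting the iteration from $\mathcal{T}(5)\leq 10$ instead gives a slightly weaker outcome, so $n_0=6$ is the right launching point; and that pushing $n_0$ beyond $12$ yields no further improvement in the displayed constant $1.09$, which is why the theorem is stated for $n\geq 12$. The main "obstacle" is therefore not mathematical depth but choosing the optimal starting index and verifying the arithmetic; there is no asymptotic improvement here, only a better leading constant.
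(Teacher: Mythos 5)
Your proposal is correct and follows essentially the same route as the paper: the paper's recursion for $l(m)$ with $n_0=6$ and $m=6$ is precisely your step-by-step iteration $13\to19\to28\to42\to63\to94\to141$ with the floor applied at each stage, followed by the observation that the resulting constant $141/(3/2)^{12}\approx 1.087$ propagates to all $n\geq 12$. (Only a cosmetic slip in your closing remark: the rounding losses occur at the odd values $13$, $19$, $63$; the value $94$ is even and that step loses nothing.)
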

\begin{proof}
    Fix an integer $n_0 \geq 1$ and consider the following recursive formula that describes a sequence of achievable constants for $\mathcal{T}(n) \leq l(m) \cdot (3/2)^{n}$ when $n \geq n_0+m$:
\begin{equation}\label{eq:recursiveconstant}
   l(m) = \left\lfloor l(m-1) \cdot \left(\frac{3}{2}\right)^{n_0+m} \right\rfloor \cdot \left(\frac{3}{2}\right)^{-n_0-m} \textit{ for } m \geq 1,
\end{equation}
where $l(0) = \mathcal{T}(n_0) \cdot (3/2)^{-n_0}$. Taking $n_0 = 6$ and $m = 6$, we obtain the thesis.
\end{proof}
Since the sequence $l(m)$ is non-increasing, we are interested in the $\lim_{m \to \infty} l(m)$.
 Computing that limit is not trivial, so we use the following recursive relation to obtain a lower bound:
\begin{equation*}
   d(m) = d(m-1) - \frac{1}{2} \cdot \left(\frac{3}{2}\right)^{-n_0-m} \textit{ for } m \geq 1,
\end{equation*}
where $d(0) = \mathcal{T}(n_0) \cdot (3/2)^{-n_0}$.
It is easy to see that $l(m) \geq d(m)$ for every $m \geq 0$. Then we have:
\begin{align}
\nonumber
  \lim_{m \to \infty} d(m) &= \lim_{m \to \infty} \left(\mathcal{T}(n_0) - \frac{1}{2} \cdot \sum_{i=1}^m \left(\frac{2}{3}\right)^i\right) \cdot \left(\frac{3}{2}\right)^{-n_0} \\
  \label{eq:constantlowerbound}
  &=\left(\mathcal{T}(n_0) - 1\right) \cdot \left(\frac{3}{2}\right)^{-n_0}.
\end{align}
\begin{rem}
Since $\mathcal{T}(4) = 9$, if we fix $n_0 = 4$ then we can substitute them into \eqref{eq:constantlowerbound} to get that $\lim_{m \to \infty} l(m) \geq \lim_{m \to \infty} d(m) = 8 \cdot (3/2)^{-4} \approx 1.59$. This lower bound is, in any case, greater than the constant that we have found in Theorem \ref{teo:improvedconst}.
\end{rem}

\section{Proof of Theorem \ref{teo:nonexistence} - The Algorithm}\label{sec:algorithm}
Computing a brute-force search for finding a trifferent code with length $n$ and cardinality $M$ would require to test $\binom{3^n}{M}$  subsets, and for each of them compare $\binom{M}{3}$ triplets: overall, for $(n,M)=(5,11)$ one would test $\approx 10^{20}$ triplets while for $(n,M)=(6,14)$ one would test approximately  $10^{30}$ triplets. These numbers are prohibitively large.

Our algorithm dramatically reduces the number of operations, without missing any potential trifferent code.
First, we list the elements of $\mathcal{C}_n = \{0, 1, 2\}^n$ in lexicographic order and fix $(i_1, i_2, \ldots, i_{M})$ as the indices representing the $M$ elements to test, requiring that $i_1<i_2<\ldots <i_{M}$.
Then, let $\mathcal{C}^m_n$ be the code containing the elements associated to the first $m$ indices.
Starting from $m=3$, we check if $\mathcal{C}^m_n$ is trifferent: based on the output, the variable $m$ and the indices are updated accordingly to the pseudocode reported in Algorithm 1.

\begin{algorithm}
\label{alg:pseudocode}
\caption{Check if $\mathcal{T}(n) \geq M$.}\label{alg:cap}
\begin{algorithmic}
\Require $\left(c(1), \ldots, c(3^n)\right) = \{0,1,2\}^n$ ordered lexicographically,
\State $\left(i_1, \ldots, i_{M}\right) \gets (1, \ldots, M)$, $m \gets 3$
\Repeat
\If{$\left\{c(i_1),\ldots,c(i_{m})\right\}$ \textbf{is} trifferent \textbf{or} $m < 3$} 
    \IIf{$m=M$} \Return True \EndIIf \State $m \gets m+1$
\Else 
\State $m' \gets $ \textbf{min}$\left\{m'' : i_{m''} \geq  3^n-M+{m''}\right\}$
\If{$m'$ \textbf{exists}}
    \State $m \gets m'-1$, $i_m \gets i_m + 1$
    \State $i_{t+1} \gets i_{t} + 1, \textit{ for every } m \leq t \leq M-1$
\Else
    \State $i_{t} \gets i_{t} + 1, \textit{ for every } m \leq t \leq M$
\EndIf
\EndIf
\Until{$i_1 \geq 2$}
\State \Return False
\end{algorithmic}
\end{algorithm}
\noindent
At each update, $\mathcal{C}^m_n$ is tested: however, only the triplets containing the $i_m$-th element have to be examined, since all the other triplets have been already verified by construction. This is the first key point of our algorithm.


In addition, we are able to force some restrictions on the set of the indices. Two codes $C,D \subseteq \mathbb{F}_3^n$ are called \emph{equivalent} if $D$ be obtained from $C$ by subsequently applying permutations to the coordinate positions and to the symbols $\{0,1,2\}$ in each coordinate.
Given a trifferent code, by symmetry we can find an equivalent code containing the zero vector and a vector of the form $(0, \ldots, 0, 1, \ldots, 1)$, and not containing  nonzero words lexicographically smaller than this vector. As a consequence, our algorithm stops the search of trifferent codes immediately when $i_1 = 2$, and limits the set of values that the second index can assume, namely, $i_2 \in \{\frac{3^{i}+1}{2} : i = 1,\ldots, n\}$.  
Furthermore, suppose there exists a trifferent code $\mathcal{C}$ with length $n$ and cardinality $M$. Let $s_0, s_1, s_2$ be the number of elements in $\mathcal{C}$ with symbols $0$, $1$ and $2$, respectively, at the first coordinate. It is easy to see that $s_i+s_j \leq \mathcal{T}(n-1)$ for $i \neq j$, so $s_0, s_1, s_2 \geq M - \mathcal{T}(n-1) $. It means that we should have for each symbol $0$, $1$ and $2$ at least $M - \mathcal{T}(n-1)$ elements in $\mathcal{C}$ with that symbol in the first coordinate. As a consequence, recalling that we list the elements of $\mathcal{C}_n$ in lexicographic order, we can force:
\begin{itemize}
    \item $i_{M - \mathcal{T}(n-1)}\leq 3^{n-1}$ (first coordinate equal to 0);
    \item $i_{2(M - \mathcal{T}(n-1))}\leq 2\cdot 3^{n-1}$ (first coordinate equal to 1);
    \item $i_{2\mathcal{T}(n-1)-M+1}> 3^{n-1}$ (first coordinate equal to 1);
    \item $i_{\mathcal{T}(n-1)+1}> 2 \cdot 3^{n-1}$ (first coordinate equal to 2).
\end{itemize}
For the sake of readability, the pseudocode reported in Algorithm 1 does not include the restrictions on the set of the indices. However, the code associated to the final version of the algorithm is publicly available and can be found at \cite{DellaFioreSito}.

We have executed our program for $(n,M) = (5, 11)$ and $(n,M)=(6, 14)$, and no trifferent code has been found. The total number of tested triplets is $\approx 10^{7}$ for $(n,M) = (5, 11)$ and $\approx 10^{11}$ for $(n, M) = (6, 14)$, thus saving a factor of $\approx 10^{13}$ and $\approx 10^{19}$, respectively, compared to the full brute-force strategy.

As a side note: inspired by a semidefinite programming upper bound for cap sets~\cite{Gijswijt}, we could alternatively obtain the upper bound~$\mathcal{T}(5)\leq 10$ using the method from~\cite{LitjensPolakSchrijver}, in which all extra constraints from Eq.\ (3) of~\cite{LitjensPolakSchrijver}  were included to obtain the bound.

\begin{rem}
For~$(n,M)=(6,13)$, the search returned a set $S$ of~$1046$ trifferent codes up to symmetry choices explained above. 
For any code in $S$, we generated all equivalent codes and deleted the ones contained in $S$ from $S$. 
We had to repeat this 3 times until the set was empty. So there are 3 distinct trifferent (n,M)=(6,13)-codes up to equivalence. These are:
\begin{footnotesize}
\begin{equation*}
\arraycolsep=1.2pt\def\arraystretch{.8}
\begin{array}{|rrrrrrrrrrrrr|}
0&0&2&2&2&2&2&0&1&1&1&1&1 \\
0&1&0&2&2&2&1&2&0&1&1&1&2 \\
0&1&1&0&2&1&2&2&2&0&1&2&1 \\
0&1&1&1&0&2&2&2&2&2&0&1&1 \\
0&1&1&2&1&0&2&2&2&1&2&0&1 \\
0&1&2&1&1&1&0&2&1&2&2&2&0 \\
\end{array}\,,\quad
\begin{array}{|rrrrrrrrrrrrr|}
0&0&2&2&2&2&2&0&1&1&1&1&1 \\ 
0&1&0&2&2&1&2&2&0&1&1&2&1 \\
0&1&1&0&2&2&1&2&2&0&1&1&2 \\
0&1&1&1&0&2&2&2&2&2&0&1&1 \\
0&1&2&1&1&0&2&2&1&2&2&0&1 \\
0&1&1&2&1&1&0&2&2&1&2&2&0 \\
\end{array}\,,\quad
\begin{array}{|rrrrrrrrrrrrr|}
0&0&2&2&2&2&2&0&1&1&1&1&1 \\
0&1&0&2&2&1&2&2&0&1&1&2&1 \\ 
0&1&1&0&2&2&2&2&2&0&1&1&1 \\
0&1&1&1&0&2&1&2&2&2&0&1&2 \\
0&1&2&1&1&0&2&2&1&2&2&0&1 \\
0&1&1&1&2&1&0&2&2&2&1&2&0 \\
\end{array}\,.
\end{equation*}\end{footnotesize}For each of these codes, in each coordinate position two symbols occur 5 times and one symbol occurs 3 times.
\end{rem}

\end{document}